\documentclass{article}
\usepackage{amsmath,url,amsthm,afterpage}
\RequirePackage[colorlinks,citecolor=blue,urlcolor=blue]{hyperref}
\theoremstyle{plain}
\newtheorem{theorem}{Theorem}

\def\lcm{{\rm lcm}} % lcm
\setlength{\arraycolsep}{0.5mm} % Correct spacing in aligned equations
\allowdisplaybreaks
\title{Counting toroidal binary arrays}
\author{S. N. Ethier\thanks{Partially supported by a grant from the Simons Foundation (209632).}\\ Department of Mathematics\\ University of Utah\\ 155 South 1400 East\\ Salt Lake City, UT 84112\\ USA\\ \url{ethier@math.utah.edu}}
\date{}
\begin{document}
\maketitle

\begin{abstract}
A formula for the number of toroidal $m\times n$ binary arrays, allowing rotation of the rows and/or the columns but not reflection, is known.  Here we find a formula for the number of toroidal $m\times n$ binary arrays, allowing rotation and/or reflection of the rows and/or the columns.
\end{abstract}

\newpage

\section{Introduction}

The number of \textit{necklaces} with $n$ beads of two colors when turning over is not allowed is
\begin{equation}\label{A000031}
\frac{1}{n}\sum_{d\,|\,n}\varphi(d)\,2^{n/d},
\end{equation}
where $\varphi$ is Euler's phi function.  When turning over is allowed, the number becomes
\begin{equation}\label{A000029}
\frac{1}{2n}\sum_{d\,|\,n}\varphi(d)\,2^{n/d}+\begin{cases}2^{(n-1)/2}&\text{if $n$ is odd},\\3\cdot2^{n/2-2}&\text{if $n$ is even}.\end{cases}
\end{equation}
These are the core sequences \href{http://oeis.org/A000031}{A000031} and \href{http://oeis.org/A000029}{A000029}, respectively, in \cite{S}.

Our concern here is with two-dimensional versions of these formulas.  We consider an $m\times n$ binary array.  When opposite edges are identified, it becomes what we will call a \textit{toroidal} binary array.  Just as we can rotate a necklace without effect, we can rotate the rows and/or the columns of such an array without effect.  The number of (distinct) toroidal $m\times n$ binary arrays is
\begin{equation}\label{A184271}
\frac{1}{mn}\sum_{c\,|\,m}\;\sum_{d\,|\,n}\varphi(c)\varphi(d)\,2^{mn/\lcm(c,d)},
\end{equation}
where lcm stands for least common multiple.  This is \href{http://oeis.org/A184271}{A184271} in \cite{S}.  The diagonal is \href{http://oeis.org/A179043}{A179043}. Rows (or columns) 2--8 are \href{http://oeis.org/A184264}{A184264}--\href{http://oeis.org/A184270}{A184270}.  Row (or column) 1 is of course \href{http://oeis.org/A000031}{A000031}.

Our aim here is to find the formula that is related to (\ref{A184271}) in the same way that (\ref{A000029}) is related to (\ref{A000031}).
More precisely, we wish to count the number of toroidal $m\times n$ binary arrays allowing rotation and/or reflection of the rows and/or the columns.  At present, the resulting sequence, the diagonal, and the rows (or columns) other than the first one, are not found in \cite{S}.  Row (or column) 1 is of course \href{http://oeis.org/A000029}{A000029}.

For an alternative description, we could define a group action on the set of $m\times n$ binary arrays, which has $2^{mn}$ elements.  If the group is $C_m\times C_n$, where $C_m$ denotes the cyclic group of order $m$, then the number of orbits is given by (\ref{A184271}) (see Theorem~\ref{rotations-thm} below).  If the group is $D_m\times D_n$, where $D_m$ denotes the dihedral group of order $2m$, then the number of orbits is given by Theorem~\ref{rot-refl-thm} below.

To help clarify the distinction between the two group actions, we provide an example.  There is no distinction in the $2\times2$ case, so we consider the $3\times3$ case.  When the group is $C_3\times C_3$ (allowing rotation of the rows and/or the columns but not reflection), there are 64 orbits, as shown in Table~\ref{3x3-rotations}.

\begin{table}[htb]
\caption{\label{3x3-rotations}A list of the 64 orbits of the group action given by the group $C_3\times C_3$ acting on the set of $3\times 3$ binary arrays.  (Rows and/or columns can be rotated but not reflected.)  Each orbit is represented by its minimal element in 9-bit binary form.  Bars separate different numbers of 1s.}  
\begin{gather*}
\begin{pmatrix}0&0&0\\0&0&0\\0&0&0\end{pmatrix}\bigg|\begin{pmatrix}0&0&0\\0&0&0\\0&0&1\end{pmatrix}\bigg|
\begin{pmatrix}0&0&0\\0&0&0\\0&1&1\end{pmatrix}\begin{pmatrix}0&0&0\\0&0&1\\0&0&1\end{pmatrix}\begin{pmatrix}0&0&0\\0&0&1\\0&1&0\end{pmatrix}\begin{pmatrix}0&0&0\\0&0&1\\1&0&0\end{pmatrix}\bigg|\\
\begin{pmatrix}0&0&0\\0&0&0\\1&1&1\end{pmatrix}\begin{pmatrix}0&0&0\\0&0&1\\0&1&1\end{pmatrix}\begin{pmatrix}0&0&0\\0&0&1\\1&0&1\end{pmatrix}\begin{pmatrix}0&0&0\\0&0&1\\1&1&0\end{pmatrix}\begin{pmatrix}0&0&0\\0&1&1\\0&0&1\end{pmatrix}\begin{pmatrix}0&0&0\\0&1&1\\0&1&0\end{pmatrix}\\
\begin{pmatrix}0&0&0\\0&1&1\\1&0&0\end{pmatrix}\begin{pmatrix}0&0&1\\0&0&1\\0&0&1\end{pmatrix}\begin{pmatrix}0&0&1\\0&0&1\\0&1&0\end{pmatrix}\begin{pmatrix}0&0&1\\0&0&1\\1&0&0\end{pmatrix}\begin{pmatrix}0&0&1\\0&1&0\\1&0&0\end{pmatrix}\begin{pmatrix}0&0&1\\1&0&0\\0&1&0\end{pmatrix}\bigg|\\
\begin{pmatrix}0&0&0\\0&0&1\\1&1&1\end{pmatrix}\begin{pmatrix}0&0&0\\0&1&1\\0&1&1\end{pmatrix}\begin{pmatrix}0&0&0\\0&1&1\\1&0&1\end{pmatrix}\begin{pmatrix}0&0&0\\0&1&1\\1&1&0\end{pmatrix}\begin{pmatrix}0&0&0\\1&1&1\\0&0&1\end{pmatrix}\begin{pmatrix}0&0&1\\0&0&1\\0&1&1\end{pmatrix}\begin{pmatrix}0&0&1\\0&0&1\\1&0&1\end{pmatrix}\\
\begin{pmatrix}0&0&1\\0&0&1\\1&1&0\end{pmatrix}\begin{pmatrix}0&0&1\\0&1&0\\0&1&1\end{pmatrix}\begin{pmatrix}0&0&1\\0&1&0\\1&0&1\end{pmatrix}\begin{pmatrix}0&0&1\\0&1&0\\1&1&0\end{pmatrix}\begin{pmatrix}0&0&1\\0&1&1\\0&1&0\end{pmatrix}\begin{pmatrix}0&0&1\\1&0&0\\0&1&1\end{pmatrix}\begin{pmatrix}0&0&1\\1&0&0\\1&1&0\end{pmatrix}\bigg|\\
\begin{pmatrix}0&0&0\\0&1&1\\1&1&1\end{pmatrix}\begin{pmatrix}0&0&0\\1&1&1\\0&1&1\end{pmatrix}\begin{pmatrix}0&0&1\\0&0&1\\1&1&1\end{pmatrix}\begin{pmatrix}0&0&1\\0&1&0\\1&1&1\end{pmatrix}\begin{pmatrix}0&0&1\\0&1&1\\0&1&1\end{pmatrix}\begin{pmatrix}0&0&1\\0&1&1\\1&0&1\end{pmatrix}\begin{pmatrix}0&0&1\\0&1&1\\1&1&0\end{pmatrix}\\
\begin{pmatrix}0&0&1\\1&0&0\\1&1&1\end{pmatrix}\begin{pmatrix}0&0&1\\1&0&1\\0&1&1\end{pmatrix}\begin{pmatrix}0&0&1\\1&0&1\\1&0&1\end{pmatrix}\begin{pmatrix}0&0&1\\1&0&1\\1&1&0\end{pmatrix}\begin{pmatrix}0&0&1\\1&1&0\\0&1&1\end{pmatrix}\begin{pmatrix}0&0&1\\1&1&0\\1&0&1\end{pmatrix}\begin{pmatrix}0&0&1\\1&1&0\\1&1&0\end{pmatrix}\bigg|\\
\begin{pmatrix}0&0&0\\1&1&1\\1&1&1\end{pmatrix}\begin{pmatrix}0&0&1\\0&1&1\\1&1&1\end{pmatrix}\begin{pmatrix}0&0&1\\1&0&1\\1&1&1\end{pmatrix}\begin{pmatrix}0&0&1\\1&1&0\\1&1&1\end{pmatrix}\begin{pmatrix}0&0&1\\1&1&1\\0&1&1\end{pmatrix}\begin{pmatrix}0&0&1\\1&1&1\\1&0&1\end{pmatrix}\\
\begin{pmatrix}0&0&1\\1&1&1\\1&1&0\end{pmatrix}\begin{pmatrix}0&1&1\\0&1&1\\0&1&1\end{pmatrix}\begin{pmatrix}0&1&1\\0&1&1\\1&0&1\end{pmatrix}\begin{pmatrix}0&1&1\\0&1&1\\1&1&0\end{pmatrix}\begin{pmatrix}0&1&1\\1&0&1\\1&1&0\end{pmatrix}\begin{pmatrix}0&1&1\\1&1&0\\1&0&1\end{pmatrix}\bigg|\\
\begin{pmatrix}0&0&1\\1&1&1\\1&1&1\end{pmatrix}\begin{pmatrix}0&1&1\\0&1&1\\1&1&1\end{pmatrix}\begin{pmatrix}0&1&1\\1&0&1\\1&1&1\end{pmatrix}\begin{pmatrix}0&1&1\\1&1&0\\1&1&1\end{pmatrix}\bigg|
\begin{pmatrix}0&1&1\\1&1&1\\1&1&1\end{pmatrix}\bigg|\begin{pmatrix}1&1&1\\1&1&1\\1&1&1\end{pmatrix}
\end{gather*}
\end{table}
\afterpage{\clearpage}

When the group is $D_3\times D_3$ (allowing rotation and/or reflection of the rows and/or the columns), there are 36 orbits, as shown in  Table~\ref{3x3-rot-refl}.  

\begin{table}[hb]
\caption{\label{3x3-rot-refl}A list of the 36 orbits of the group action given by the group $D_3\times D_3$ acting on the set of $3\times 3$ binary arrays.  (Rows and/or columns can be rotated and/or reflected.)  Each orbit is represented by its minimal element in 9-bit binary form.  Bars separate different numbers of 1s.}
\begin{gather*}
\begin{pmatrix}0&0&0\\0&0&0\\0&0&0\end{pmatrix}\bigg|
\begin{pmatrix}0&0&0\\0&0&0\\0&0&1\end{pmatrix}\bigg|
\begin{pmatrix}0&0&0\\0&0&0\\0&1&1\end{pmatrix}\begin{pmatrix}0&0&0\\0&0&1\\0&0&1\end{pmatrix}\begin{pmatrix}0&0&0\\0&0&1\\0&1&0\end{pmatrix}\bigg|\\
\begin{pmatrix}0&0&0\\0&0&0\\1&1&1\end{pmatrix}\begin{pmatrix}0&0&0\\0&0&1\\0&1&1\end{pmatrix}\begin{pmatrix}0&0&0\\0&0&1\\1&1&0\end{pmatrix}\begin{pmatrix}0&0&1\\0&0&1\\0&0&1\end{pmatrix}\begin{pmatrix}0&0&1\\0&0&1\\0&1&0\end{pmatrix}\begin{pmatrix}0&0&1\\0&1&0\\1&0&0\end{pmatrix}\bigg|\\
\begin{pmatrix}0&0&0\\0&0&1\\1&1&1\end{pmatrix}\begin{pmatrix}0&0&0\\0&1&1\\0&1&1\end{pmatrix}\begin{pmatrix}0&0&0\\0&1&1\\1&0&1\end{pmatrix}\begin{pmatrix}0&0&1\\0&0&1\\0&1&1\end{pmatrix}\begin{pmatrix}0&0&1\\0&0&1\\1&1&0\end{pmatrix}\begin{pmatrix}0&0&1\\0&1&0\\0&1&1\end{pmatrix}\begin{pmatrix}0&0&1\\0&1&0\\1&0&1\end{pmatrix}\bigg|\\
\begin{pmatrix}0&0&0\\0&1&1\\1&1&1\end{pmatrix}\begin{pmatrix}0&0&1\\0&0&1\\1&1&1\end{pmatrix}\begin{pmatrix}0&0&1\\0&1&0\\1&1&1\end{pmatrix}\begin{pmatrix}0&0&1\\0&1&1\\0&1&1\end{pmatrix}\begin{pmatrix}0&0&1\\0&1&1\\1&0&1\end{pmatrix}\begin{pmatrix}0&0&1\\0&1&1\\1&1&0\end{pmatrix}\begin{pmatrix}0&0&1\\1&1&0\\1&1&0\end{pmatrix}\bigg|\\
\begin{pmatrix}0&0&0\\1&1&1\\1&1&1\end{pmatrix}\begin{pmatrix}0&0&1\\0&1&1\\1&1&1\end{pmatrix}\begin{pmatrix}0&0&1\\1&1&0\\1&1&1\end{pmatrix}\begin{pmatrix}0&1&1\\0&1&1\\0&1&1\end{pmatrix}\begin{pmatrix}0&1&1\\0&1&1\\1&0&1\end{pmatrix}\begin{pmatrix}0&1&1\\1&0&1\\1&1&0\end{pmatrix}\bigg|\\
\begin{pmatrix}0&0&1\\1&1&1\\1&1&1\end{pmatrix}\begin{pmatrix}0&1&1\\0&1&1\\1&1&1\end{pmatrix}\begin{pmatrix}0&1&1\\1&0&1\\1&1&1\end{pmatrix}\bigg|
\begin{pmatrix}0&1&1\\1&1&1\\1&1&1\end{pmatrix}\bigg|
\begin{pmatrix}1&1&1\\1&1&1\\1&1&1\end{pmatrix}
\end{gather*}
\end{table}

Our interest in the number of toroidal $m\times n$ binary arrays allowing rotation and/or reflection of the rows and/or the columns derives from the fact that this is the size of the state space of the projection of the Markov chain in \cite{MR} under the mapping that takes a state to the orbit containing it.  This reduction of the state space, from 512 states to 36 states in the $3\times3$ case for example, makes computation easier.

\section{Rotations of rows and columns}

Let $X_{m,n}:=\{0,1\}^{\{0,1,\ldots,m-1\}\times\{0,1,\ldots,n-1\}}$ be the set of $m\times n$ arrays of 0s and 1s, of which there are $2^{mn}$.  Let $a(m,n)$ denote the number of orbits of the group action on $X_{m,n}$ by the group $C_m\times C_n$.  In other words, $a(m,n)$ is the number of (distinct) toroidal $m\times n$ binary arrays, allowing rotation of the rows and/or the columns but not reflection.

\begin{theorem}\label{rotations-thm}
\begin{equation}\label{a(m,n)}
a(m,n)=\frac{1}{mn}\sum_{c\,|\,m}\;\sum_{d\,|\,n}\varphi(c)\varphi(d)\,2^{mn/\lcm(c,d)}.
\end{equation}
\end{theorem}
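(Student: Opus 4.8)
The plan is to apply the Cauchy--Frobenius (Burnside) lemma to the action of $C_m\times C_n$ on $X_{m,n}$. Writing a typical group element as a pair $(i,j)$ with $0\le i<m$ and $0\le j<n$, where $i$ denotes a cyclic shift of the rows by $i$ positions and $j$ a cyclic shift of the columns by $j$ positions, the lemma gives
\[
a(m,n)=\frac{1}{mn}\sum_{i=0}^{m-1}\sum_{j=0}^{n-1}|\mathrm{Fix}(i,j)|,
\]
so the task reduces to counting, for each $(i,j)$, the arrays left invariant by that shift.

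First I would identify $\mathrm{Fix}(i,j)$. The shift $(i,j)$ induces the permutation $(r,s)\mapsto(r+i,\,s+j)$ of the index set $\{0,\ldots,m-1\}\times\{0,\ldots,n-1\}$, with coordinates reduced modulo $m$ and $n$, and an array is fixed precisely when it is constant on each cycle of this permutation. Hence $|\mathrm{Fix}(i,j)|=2^{N(i,j)}$, where $N(i,j)$ is the number of cycles. Because the cyclic group generated by $(i,j)$ acts on $\{0,\ldots,m-1\}\times\{0,\ldots,n-1\}$ by translation, every orbit is a coset and so has size equal to the order of $(i,j)$; therefore all cycles share the common length $k:=\mathrm{ord}(i,j)$ and $N(i,j)=mn/k$. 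Since the order of the $i$-th row shift is $m/\gcd(i,m)$ and similarly for the columns, we have $k=\lcm\!\big(m/\gcd(i,m),\,n/\gcd(j,n)\big)$.

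Next I would reorganize the double sum by grouping the shifts according to the orders of their components. Setting $c:=m/\gcd(i,m)$ and $d:=n/\gcd(j,n)$, which are divisors of $m$ and $n$ respectively, one has $k=\lcm(c,d)$ for such a pair, so that $|\mathrm{Fix}(i,j)|=2^{mn/\lcm(c,d)}$ depends on $(i,j)$ only through $(c,d)$. It then remains to count, for each divisor $c\,|\,m$, the number of $i\in\{0,\ldots,m-1\}$ of order exactly $c$; this number is $\varphi(c)$, since such $i$ are precisely the generators of the unique subgroup of order $c$ in $C_m$. Using the analogous count $\varphi(d)$ for the columns and substituting into the Burnside sum yields
\[
a(m,n)=\frac{1}{mn}\sum_{c\,|\,m}\;\sum_{d\,|\,n}\varphi(c)\varphi(d)\,2^{mn/\lcm(c,d)},
\]
as claimed.

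The one step warranting care is the cycle count $N(i,j)=mn/k$: I expect the main obstacle to be articulating cleanly why every cycle of the translation has length equal to $\mathrm{ord}(i,j)$, rather than treating the two component shifts separately. The coset observation settles this at once, but it deserves to be stated explicitly, since this uniform cycle length is exactly what produces the exponent $mn/\lcm(c,d)$ and hence the appearance of the least common multiple in the final formula.
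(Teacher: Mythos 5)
Your proof is correct and takes essentially the same approach as the paper's: Burnside/P\'olya applied to $C_m\times C_n$, with the fixed-point count $2^{mn/\lcm(c,d)}$ obtained by grouping shifts according to the orders $c$, $d$ of their row and column components and counting those shifts with $\varphi(c)\varphi(d)$. Your coset observation is in fact a slightly cleaner justification than the paper gives for why \emph{every} cycle of the translation has length exactly $\lcm(c,d)$ (the paper asserts this directly from the order of the commuting product), but the argument is otherwise identical.
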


\begin{proof}
By the P\'olya enumeration theorem \cite{Wc},
\begin{equation}\label{a(m,n)-PET}
a(m,n)=\frac{1}{mn}\sum_{i=0}^{m-1}\sum_{j=0}^{n-1}2^{A_{ij}},
\end{equation}
where $A_{ij}$ is the number of cycles in the permutation $\sigma^i\tau^j$; here $\sigma$ rotates the rows (row 0 becomes row 1, row 1 becomes row 2, \dots, row $m-1$ becomes row 0) and $\tau$ rotates the columns.  For example, $A_{00}=mn$ because the identity permutation has $mn$ fixed points, each of which is a cycle of length 1.

It is well known that, if $d$ divides $n$, then the number of elements of $C_n$ that are of order $d$ is $\varphi(d)$.  So if $c$ divides $m$ and $d$ divides $n$, then the number of pairs $(i,j)$ such that $\sigma^i$ is of order $c$ and $\tau^j$ is of order $d$ is $\varphi(c)\varphi(d)$.  And if $\sigma^i$ is of order $c$ and $\tau^j$ is of order $d$, then $\sigma^i\tau^j$ is of order $\lcm(c,d)$ because $\sigma^i$ and $\tau^j$ commute.  Consequently, $\lcm(c,d)$ is the length of each cycle of the permutation $\sigma^i\tau^j$, so $A_{ij}=mn/\lcm(c,d)$, and (\ref{a(m,n)}) follows from (\ref{a(m,n)-PET}).
\end{proof}

Clearly, $a(1,n)$ reduces to (\ref{A000031}).  Table~\ref{array-a} provides numerical values of $a(m,n)$ for small $m$ and $n$.

\begin{table}[htb]
\caption{\label{array-a}The number $a(m,n)$ of toroidal $m\times n$ binary arrays, allowing rotation of the rows and/or the columns but not reflection, for $m,n=1,2,\ldots,8$.}
\catcode`@=\active \def@{\hphantom{0}}
\tabcolsep=1.5mm
\begin{center}
\begin{tiny}
\begin{tabular}{rrrrrrrr}
\hline
\noalign{\smallskip}
2 & 3 & 4 & 6 & 8 & 14 & 20 & 36\\ 3 & 7 & 14 & 40 & 108 & 362 & 1182 & 
  4150\\ 4 & 14 & 64 & 352 & 2192 & 14624 & 99880 & 699252\\ 6 & 40 & 352 & 
  4156 & 52488 & 699600 & 9587580 & 134223976\\ 8 & 108 & 2192 & 52488 & 
  1342208 & 35792568 & 981706832 & 27487816992\\ 14 & 362 & 14624 & 699600 &
   35792568 & 1908897152 & 104715443852 & 5864063066500\\ 20 & 1182 & 
  99880 & 9587580 & 981706832 & 104715443852 & 11488774559744 & 
  1286742755471400\\ 36 & 4150 & 699252 & 134223976 & 27487816992 & 
  5864063066500 & 1286742755471400 & 288230376353050816\\
\noalign{\smallskip}
\hline
\end{tabular}
\end{tiny}
\end{center}
\end{table}

\section{Rotations and reflections of rows and columns}

Let $b(m,n)$ denote the number of orbits of the group action on $X_{m,n}$ by the group $D_m\times D_n$.  In other words, $b(m,n)$ is the number of (distinct) toroidal $m\times n$ binary arrays, allowing rotation and/or reflection of the rows and/or the columns.

\begin{theorem}\label{rot-refl-thm}
\begin{equation}
b(m,n)=b_1(m,n)+b_2(m,n)+b_3(m,n)+b_4(m,n),
\end{equation}
where
$$
b_1(m,n)=\frac{1}{4mn}\sum_{c\,|\,m}\;\sum_{d\,|\,n}\varphi(c)\varphi(d)\,2^{mn/\lcm(c,d)},
$$ 
\begin{eqnarray*}
&&\!\!\!\!\!\!\!\!b_2(m,n)\\
&=&\begin{cases}(4n)^{-1}2^{(m+1)n/2}&\text{if $m$ is odd}\\(8n)^{-1}[2^{mn/2}+2^{(m+2)n/2}]&\text{if $m$ is even}\end{cases}\;\;+\frac{1}{4n}\sum_{d\ge2:\,d\,|\,n}\varphi(d)\,2^{mn/d}\\
&&{}+\begin{cases}(4n)^{-1}\sum'[2^{(m + 1)\gcd(j, n)/2} - 2^{m \gcd(j, n)}]&\text{if $m$ is odd}\\(8n)^{-1}\sum'[2^{m\gcd(j, n)/2}+2^{(m+2)\gcd(j, n)/2}- 2^{m \gcd(j, n)+1}]&\text{if $m$ is even}\end{cases}
\end{eqnarray*}
with $\sum'=\sum_{1\le j\le n-1:\,n/\gcd(j,n){\rm\ is\ odd}}$,
$$b_3(m,n)=b_2(n,m),$$ 
and
$$
b_4(m,n)=\begin{cases}2^{(mn - 3)/2}&\text{if $m$ and $n$ are odd},\\
3\cdot2^{mn/2 - 3}&\text{if $m$ and $n$ have opposite parity},\\
7\cdot2^{m n/2 - 4}&\text{if $m$ and $n$ are even}.\end{cases}
$$
\end{theorem}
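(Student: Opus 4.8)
The plan is to apply the Cauchy--Frobenius (Burnside) lemma exactly as in the proof of Theorem~\ref{rotations-thm}, but now to the larger group $D_m\times D_n$, which has $4mn$ elements. Writing a group element as a pair $(g_1,g_2)$ acting on a cell by $(r,c)\mapsto(g_1(r),g_2(c))$, I would begin from
\begin{equation*}
b(m,n)=\frac{1}{4mn}\sum_{(g_1,g_2)\in D_m\times D_n}2^{c(g_1,g_2)},
\end{equation*}
where $c(g_1,g_2)$ is the number of cycles of the induced permutation of the $mn$ cells. The key reduction is the standard fact that if $g_1$ has a cycle of length $a$ and $g_2$ a cycle of length $b$, then on the corresponding $a\times b$ block of cells the product permutation splits into exactly $\gcd(a,b)$ cycles, each of length $\lcm(a,b)$; summing over all pairs of cycles gives
\begin{equation*}
c(g_1,g_2)=\sum_{\alpha}\sum_{\beta}\gcd(|\alpha|,|\beta|),
\end{equation*}
the sums running over the cycles $\alpha$ of $g_1$ and $\beta$ of $g_2$. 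Thus the entire computation is governed by the cycle types of the individual dihedral elements.

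Next I would split $D_m\times D_n$ into the four natural classes according to whether each coordinate is a rotation or a reflection, producing the four summands $b_1,b_2,b_3,b_4$. The rotation--rotation class reproduces the computation already carried out for $a(m,n)$: a rotation of order $c$ (resp.\ $d$) has all cycles of length $c$ (resp.\ $d$), so $c(g_1,g_2)=mn/\lcm(c,d)$, and the contribution is $\tfrac14 a(m,n)=b_1(m,n)$. For the remaining classes I would invoke the cycle types of reflections: a reflection in $D_m$ has type $1^1\,2^{(m-1)/2}$ when $m$ is odd, while for $m$ even there are $m/2$ reflections of type $1^2\,2^{(m-2)/2}$ and $m/2$ of type $2^{m/2}$. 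The reflection--rotation class then yields $b_2(m,n)$, the rotation--reflection class yields its transpose $b_3(m,n)=b_2(n,m)$, and the reflection--reflection class yields $b_4(m,n)$.

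For $b_4$ I would evaluate $c(g_1,g_2)$ for each pairing of reflection types via the $\gcd$ formula. The three parity cases give $b_4(m,n)=2^{(mn-3)/2}$ (both odd, where each reflection has a single type), $3\cdot2^{mn/2-3}$ (opposite parity), and $7\cdot2^{mn/2-4}$ (both even); the factors $3$ and $7$ emerge from summing $2^{c(g_1,g_2)}$ over the two, respectively four, combinations of reflection subtypes that occur when a dimension is even. This part is routine once the cycle types are recorded.

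For $b_2$, fixing a rotation $\tau^j$ of order $d\mid n$ (with $n/d$ cycles of length $d$), the $\gcd$ formula shows that $c(g_1,\tau^j)$ depends on the parity of $d$ through $\gcd(2,d)$ and, when $m$ is even, also on which of the two reflection types $g_1$ is. I expect the main obstacle to be bookkeeping rather than conceptual. The direct computation produces, for odd rotation orders $d$, exponents of the form $(m+1)\gcd(j,n)/2$ (or $(m+2)\gcd(j,n)/2$ and $m\gcd(j,n)/2$ when $m$ is even), whereas the cleaner middle sum $\frac{1}{4n}\sum_{d\ge2}\varphi(d)2^{mn/d}$ that I would prefer to display carries the exponent $mn/d$ uniformly. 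Reconciling these requires writing the order-$d$ rotations with $d$ odd via $\gcd(j,n)=n/d$ and adding the correction sum $\sum'$ over $j$ with $n/\gcd(j,n)$ odd, whose $-2^{m\gcd(j,n)}$ (resp.\ $-2^{m\gcd(j,n)+1}$) terms cancel exactly the over-counted odd-$d$ part of the middle sum, leaving the isolated $d=1$ term to serve as the leading case-split on the parity of $m$. Verifying that this cancellation is exact is the one step I would check most carefully.
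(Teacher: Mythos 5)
Your proposal is correct and follows essentially the same route as the paper: Burnside/P\'olya applied to $D_m\times D_n$, split into the four rotation/reflection classes, with the same cycle-type analysis of dihedral elements (one fixed point for $m$ odd; the two reflection subtypes $1^2\,2^{(m-2)/2}$ and $2^{m/2}$ for $m$ even) and the same reconciliation in $b_2$ of the uniform middle sum against the correction term $\sum'$ for odd rotation orders, with the $j=0$ (order $d=1$) contribution isolated as the leading case-split. The only difference is organizational: you package all cycle counts through the uniform product lemma $c(g_1,g_2)=\sum_{\alpha,\beta}\gcd(|\alpha|,|\beta|)$, whereas the paper argues directly about element orders and rows fixed by $\sigma^i\rho$, but these yield identical exponents in every case.
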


\begin{proof}
Again by the P\'olya enumeration theorem \cite{Wc},
\begin{eqnarray}\label{b(m,n)-PET}
b(m,n)&=&\frac{1}{4mn}\sum_{i=0}^{m-1}\sum_{j=0}^{n-1}[2^{A_{ij}}+2^{B_{ij}}+2^{C_{ij}}+2^{D_{ij}}],
\end{eqnarray}
where $A_{ij}$ (resp., $B_{ij}$, $C_{ij}$, $D_{ij}$) is the number of cycles in the permutation $\sigma^i\tau^j$ (resp., $\sigma^i\tau^j\rho$, $\sigma^i\tau^j\theta$, $\sigma^i\tau^j\rho\theta$); here $\sigma$ rotates the rows (row 0 becomes row 1, row 1 becomes row 2, \dots, row $m-1$ becomes row 0), $\tau$ rotates the columns, $\rho$ reflects the rows (rows 0 and $m-1$ are interchanged, rows 1 and $m-2$ are interchanged, \dots, rows $\lfloor m/2\rfloor-1$ and $m-\lfloor m/2\rfloor$ are interchanged), and $\theta$ reflects the columns.

By the proof of Theorem~\ref{rotations-thm}, we know the form of $A_{ij}$, and this gives the formula for $b_1(m,n)$.

Next we find $(B_{i0})$, the entries in the 0th column of matrix $B$.  For $i=0,1,\ldots,m-1$, the permutation $\sigma^i\rho$ can be described by its effect on the rows of $\{0,1,\ldots,m-1\}\times\{0,1,\ldots,n-1\}$.  It reverses the first $m-i$ rows and reverses the last $i$ rows.  Since the reversal of $k$ consecutive integers has $k/2$ transpositions if $k$ is even and $(k-1)/2$ transpositions and one fixed point if $k$ is odd, the permutation of $\{0,1,\ldots,m-1\}$ induced by $\sigma^i\rho$ has $(m-1)/2$ transpositions and one fixed point if $m$ is odd, and $m/2$ transpositions if $i$ is even and $m$ is even, and $(m-2)/2$ transpositions and two fixed points if $i$ is odd and $m$ is even.  These numbers must be multiplied by $n$ for the permutation $\sigma^i\rho$ of $\{0,1,\ldots,m-1\}\times\{0,1,\ldots,n-1\}$.  The results are that $B_{i0}=(m+1)n/2$ if $m$ is odd, $B_{i0}=mn/2$ if $i$ is even and $m$ is even, and $B_{i0}=(m+2)n/2$ if $i$ is odd and $m$ is even.  Therefore, $(4mn)^{-1}\sum_{i=0}^{m-1}2^{B_{i0}}=(4n)^{-1}2^{(m+1)n/2}$ if $m$ is odd, whereas $(4mn)^{-1}\sum_{i=0}^{m-1}2^{B_{i0}}=(8n)^{-1}[2^{mn/2}+2^{(m+2)n/2}]$ if $m$ is even, and this gives the first term in the formula for $b_2(m,n)$.

We turn to $B_{ij}$ for $i=0,1,\ldots,m-1$ and $j=1,2,\ldots,n-1$.  First, by a property of cyclic groups, $\tau^j$ has order $d:=n/\gcd(j,n)$.  If $d$ is even, then, since $\sigma^i\rho$ has order 2 (see the preceding paragraph), $\sigma^i\tau^j\rho$ has order $d$ and all of its cycles have length $d$.  In this case, $B_{ij}=mn/d=m\gcd(j,n)$.  Suppose then that $d$ is odd.  There are three cases: ($i$) $m$ odd, ($ii$) $i$ even and $m$ even, and ($iii$) $i$ odd and $m$ even.  Recall that $\sigma^i\rho$ reverses the first $m-i$ rows and reverses the last $i$ rows.  In case ($i$), one row is fixed by $\sigma^i\rho$, so cycles of $\sigma^i\tau^j\rho$ in this row have length $d$ and all others have length $2d$.  We find that $B_{ij}=n/d+(m-1)n/(2d)=(m+1)n/(2d)=(m+1)\gcd(j,n)/2$.  In case ($ii$), no rows are fixed by $\sigma^i\rho$, so all cycles of $\sigma^i\tau^j\rho$ have length $2d$.  It follows that $B_{ij}=mn/(2d)=m\gcd(j,n)/2$.  In case ($iii$), two rows are fixed by $\sigma^i\rho$, so cycles of $\sigma^i\tau^j\rho$ in these rows have length $d$ and all others have length $2d$.  We conclude that $B_{ij}=2n/d+(m-2)n/(2d)=(m+2)\gcd(j,n)/2$.  If the formula for $B_{ij}$ that holds when $d$ is even were valid generally, we would have the second term in the formula for $b_2(m,n)$.  The third term in the formula for $b_2(m,n)$ is a correction to the second term to treat the cases ($i$)--($iii$) in which $d$ is odd.  

Next, the formula for $b_3(m,n)$ follows by symmetry.  More explicitly,
$$
b_3(m,n)=\frac{1}{4mn}\sum_{i=0}^{m-1}\sum_{j=0}^{n-1}2^{C_{ij}}=\frac{1}{4nm}\sum_{j=0}^{n-1}\sum_{i=0}^{m-1}2^{B_{ji}}=b_2(n,m)
$$
because the number of cycles $C_{ij}$ of $\sigma^i\tau^j\theta$ for an $m\times n$ array is equal to the number of cycles $B_{ji}$ of $\sigma^j\tau^i\rho$ for an $n\times m$ array.

Finally, we consider $b_4(m,n)$.  For $i=0,1,\ldots,m-1$ and $j=0,1,\ldots,n-1$, $\sigma^i\tau^j\rho\theta$ has the effect of reversing the first $m-i$ rows, reversing the last $i$ rows, reversing the first $n-j$ columns, and reversing the last $j$ columns.  If $m$ and $n$ are odd, then there is one fixed point and $(mn-1)/2$ transpositions, so $D_{ij}=(mn+1)/2$ for all $i$ and $j$, hence $b_4(m,n)=2^{(mn-3)/2}$.  If $m$ is odd and $n$ is even, then $D_{ij}=mn/2$ for all $i$ and even $j$ and $D_{ij}=mn/2+1$ for all $i$ and odd $j$.  This leads to $b_4(m,n)=(1/8)[2^{mn/2}+2^{mn/2+1}]=3\cdot 2^{mn/2-3}$.  If $m$ is even and $n$ is odd, then $D_{ij}=mn/2$ for even $i$ and all $j$ and $D_{ij}=mn/2+1$ for odd $i$ and all $j$.  This leads to the same formula for $b_4(m,n)$.  Finally, if $m$ and $n$ are even, then $D_{ij}=mn/2$ unless $i$ and $j$ are both odd, in which case $D_{ij}=mn/2+2$.  This implies that $b_4(m,n)=(1/4)[(3/4)2^{mn/2}+(1/4)2^{mn/2+2}]=7\cdot2^{mn/2-4}$.

This completes the proof.
\end{proof}

It is easy to check that $b(1,n)$ reduces to (\ref{A000029}).  Table~\ref{array-b} provides numerical values of $b(m,n)$ for small $m$ and $n$.

\begin{table}[ht]
\caption{\label{array-b}The number $b(m,n)$ of toroidal $m\times n$ binary arrays, allowing rotation and/or reflection of the rows and/or the columns, for $m,n=1,2,\ldots,8$.}
\catcode`@=\active \def@{\hphantom{0}}
\tabcolsep=1.5mm
\begin{center}
\begin{tiny}
\begin{tabular}{rrrrrrrr}
\hline
\noalign{\smallskip}
2 & 3 & 4 & 6 & 8 & 13 & 18 & 30\\ 3 & 7 & 13 & 34 & 78 & 237 & 687 & 2299\\ 4 & 
  13 & 36 & 158 & 708 & 4236 & 26412 & 180070\\ 6 & 34 & 158 & 1459 & 14676 & 
  184854 & 2445918 & 33888844\\ 8 & 78 & 708 & 14676 & 340880 & 8999762 & 
  245619576 & 6873769668\\ 13 & 237 & 4236 & 184854 & 8999762 & 478070832 & 
  26185264801 & 1466114420489\\ 18 & 687 & 26412 & 2445918 & 245619576 & 
  26185264801 & 2872221202512 & 321686550498774\\ 30 & 2299 & 180070 & 
  33888844 & 6873769668 & 1466114420489 & 321686550498774 & 
  72057630729710704\\
\noalign{\smallskip}
\hline
\end{tabular}
\end{tiny}
\end{center}
\end{table}

\end{document}